\newtheorem{theorem}{Theorem}
\newtheorem{lemma}[theorem]{Lemma}
\newtheorem{proposition}[theorem]{Proposition}
\newtheorem{definition}[theorem]{Definition}
\newtheorem{remark}[theorem]{Remark}
\renewcommand{\SS}{\mathbb{S}}
\newcommand\mES[1]{\maE_{\maS}(#1)}
\newcommand\maS{\mathcal{S}}
\newcommand\maE{\mathcal{E}}
\begin{document}

\title[Essential Spectrum]
{exhaustive families of representations of $C^*$-algebras 
associated to $N$-body Hamiltonians with asymptotically
homogeneous interactions}

\author[J. Mougel]{J\'er\'emy Mougel} \address{Universit\'{e} de
  Lorraine, UFR MIM, 3 rue Augustin Fresnel 57045 METZ, France}
\email{jeremy.mougel@univ-lorraine.fr}

\author[N. Prudhon]{Nicolas Prudhon} \address{Universit\'{e} de
  Lorraine, UFR MIM, 3 rue Augustin Fresnel 57045 METZ, France}
\email{nicolas.prudhon@univ-lorraine.fr}

\begin{abstract}
We continue the analysis of algebras 
introduced by Georgescu,  Nistor and their coauthors, 
in order to study $N$-body type Hamiltonians with interactions.
More precisely, let $Y \subset X$ be a linear subspace of 
a finite dimensional Euclidean space $X$,
and $v_Y$ be a continuous function on $X/Y$ that has uniform
homogeneous radial limits at infinity. We consider, in this paper,
Hamiltonians of the form $H = - \Delta + \sum_{Y \in \maS} v_Y$, where
the subspaces $Y \subset X$ belong to some given family $\maS$
of subspaces. Georgescu and Nistor have considered the case when
$\maS$ consists of {\em all} subspaces $Y \subset X$, and 
Nistor and the authors considered the case when $\maS$ is a finite semi lattice
and Georgescu generalized these results to any families. 
In this paper, we develop new techniques to prove their results 
on the spectral theory of the Hamiltonian to the case where $\maS$ is any
family of subspaces also, and extend those results to other operators 
affiliated to a
larger algebra of pseudo-differential operators associated to the action of $X$
introduced by Connes. 
In addition, we exhibit Fredholm conditions for such elliptic operators. 
We also note that the algebras we consider 
answer a question of Melrose and Singer. 
\end{abstract}

\maketitle 



An new approach in the study of Hamiltonians of $N$-body type with
interactions that are asymptotically homogeneous at infinity on a
finite dimensional Euclidean space $X$ was initiated by Georgescu 
and Nistor \cite{Georgescu, GN, Georgescu18}. 

For any finite real vector space $Z$, we let $\overline{Z}$ denote its
spherical compactification. A function in $C(\overline{Z})$
is thus a continuous function on $Z$ that has uniform radial limits at
infinity. Let $\SS_Z$ be the set of half-lines in $Z$,
that is $\SS_Z := \{\, \hat{a},\ a\in Z, a \neq 0 \,\}$ where $\hat{a}
:= \{ra, \, r>0\}$. We identify $\SS_Z = \overline{Z}\smallsetminus Z$ . 

For any subspace $Y \subset X$, $\pi_{Y} : X \to X/Y$
denotes the canonical projection.  Let
\begin{equation}\label{eq.simple.H}
 H = - \Delta + \sum_{Y \in \maS} v_Y \ , 
\end{equation}
where $v_Y \in C(\overline{X/Y})$ is seen as a bounded continuous 
function on $X$ via the projection $\pi_Y : X \to X/Y$. 
The sum is over {\em all}
subspaces $Y \subset X$, $Y \in \maS$ and is assumed to be
uniformly convergent. One of the main results of \cite{GN, MNP}
describe the essential spectrum of $H$ extending the celebrated HVZ theorem \cite{Simon4}.  
The goal of this paper
is to explain how these results can be extended to any family of 
subspaces that contains $\{0\}$ and to more general operators using $C^*$-algebras techniques. 

Let $\maS$ be a family of subspaces of $X$ with $0\in \maS$. 
We define the commutative sub-$C^*$-algebra $\maE_\maS(X)$
of the commutative $C^*$-algebra $C_b^u(X)$ of bounded uniformly continous functions on $X$ by
\begin{equation}\label{eq.def.maEX}
 \maE_\maS(X) = \langle C(\overline{X/Y}) \ , \quad  Y \in \maS\ \rangle \subset  C_b^u(X).
\end{equation}
The algebras $\maE_\maS(X)$ give an answer to a question of 
Melrose and Singer \cite{MelroseSinger}.
\begin{theorem} Let $n$ be an integer.
Let $\maS^n$ be the semi-lattice of subspaces of $X^n$ generated by
$\maS_{i}^n \cup \maS_{ij}^n$ where
\begin{align*}
\maS_i^n=\{(x_1,\ldots,x_n)\in X^n\,;\, x_i=0 \}\,\\
\maS^n_{ij}=\{(x_1,\ldots,x_n)\in X^n\,;\, x_i=x_j \}
\end{align*}
Then the spectrum $\Omega_{\maS^n}$ of $\maE_{\maS^n}(X^n)$ is a 
compactification of $X^n$ satisfying the following properties :
\begin{enumerate}
\item $\Omega_{\maS^1}$ is the spherical compactification $\overline{X}$,
\item The action of the symmetric group $\mathfrak{S}_n$ on $X^n$ extends
continuously to $\Omega_{\maS_n}$,
\item The projections $p^{n,k}_I\colon X^n \to X^k$, 
$p^{n,k}_I(x_1,\ldots,x_n)=(x_{i_1},\ldots,x_{i_k})$ extend continuously
to $p^{n,k}_I\colon \Omega_{\maS_n} \to \Omega_{\maS_k}$,
\item The difference maps $\delta_{ij}(x_1,\ldots, x_n)=x_i-x_j$ from
$X^n$ to $X$ extend continuously to the compactifications.
\end{enumerate}
\end{theorem}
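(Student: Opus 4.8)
The plan is to reduce all four statements to a single algebraic verification via Gelfand duality. The basic tool is the following standard principle, which I would isolate as a preliminary lemma: if $A \subset C_b^u(M)$ and $B \subset C_b^u(N)$ are unital $C^*$-subalgebras, each containing $C_0$ and separating points, so that their Gelfand spectra are compactifications $\widehat A \supset M$ and $\widehat B \supset N$, and if $f \colon M \to N$ is continuous with $f^* B \subset A$ where $f^*g := g \circ f$, then $f^* \colon B \to A$ is a unital $*$-homomorphism whose Gelfand transform is a continuous map $\widehat f \colon \widehat A \to \widehat B$ restricting to $f$ on $M$. The last point is checked on the dense set of evaluation characters, since $\widehat f(\mathrm{ev}_x) = \mathrm{ev}_x \circ f^* = \mathrm{ev}_{f(x)}$. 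Crucially, to verify $f^* B \subset A$ it suffices to check it on a generating set of $B$, because $f^*$ is a continuous $*$-homomorphism.

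Next I would identify the generators of $\maE_{\maS^n}(X^n)$. For $Y = \maS_i^n$ the projection $\pi_Y$ is $(x_1,\dots,x_n)\mapsto x_i$, giving $X^n/\maS_i^n \cong X$, so $C(\overline{X^n/\maS_i^n})$ consists of the functions $x \mapsto g(x_i)$ with $g \in C(\overline X)$; for $Y = \maS_{ij}^n$, $\pi_Y$ is $(x_1,\dots,x_n)\mapsto x_i - x_j$, so $C(\overline{X^n/\maS_{ij}^n})$ consists of the functions $x \mapsto g(x_i - x_j)$. Thus $\maE_{\maS^n}(X^n)$ is generated by these two families. Since $\{0\} = \bigcap_i \maS_i^n \in \maS^n$, we have $C(\overline{X^n}) \subset \maE_{\maS^n}(X^n)$, so the algebra contains $C_0(X^n)$, is unital, and separates points; hence $\Omega_{\maS^n}$ is a compactification of $X^n$. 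For $n=1$ there are no indices $i\neq j$ and $\maS_1^1 = \{0\}$, so $\maE_{\maS^1}(X) = C(\overline X)$ and $\Omega_{\maS^1} = \overline X$, proving (1).

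For (2)--(4) I then only have to exhibit the relevant algebra maps on generators. For $\sigma \in \mathfrak{S}_n$, the map $\sigma\colon X^n \to X^n$ sends $\maS_i^n$ to $\maS_{\sigma(i)}^n$ and $\maS_{ij}^n$ to $\maS_{\sigma(i)\sigma(j)}^n$, so $\sigma^*$ permutes the two generating families among themselves and restricts to a $*$-automorphism of $\maE_{\maS^n}(X^n)$; its Gelfand transform is a homeomorphism of $\Omega_{\maS^n}$ extending $\sigma$, and functoriality gives the group law, proving (2) (continuity of the whole action is automatic since $\mathfrak{S}_n$ is finite). For (3), $(p^{n,k}_I)^*$ sends a generator $y \mapsto g(y_a)$ of $\maE_{\maS^k}(X^k)$ to $x \mapsto g(x_{i_a})$ and a generator $y\mapsto g(y_a - y_b)$ to $x \mapsto g(x_{i_a}-x_{i_b})$; both lie in $\maE_{\maS^n}(X^n)$ (the second being constant if $i_a = i_b$), so the principle applies with $f = p^{n,k}_I$. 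For (4), $\delta_{ij}^*$ sends $g \in C(\overline X) = \maE_{\{0\}}(X)$ to $x \mapsto g(x_i - x_j)$, a generator of $\maE_{\maS^n}(X^n)$, and since $\Omega_{\{0\}} = \overline X$ the principle yields the continuous extension $\Omega_{\maS^n} \to \overline X$.

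The argument is essentially formal once the generators are identified, so I do not expect a deep obstacle; the points requiring care are the correct identification of each quotient $X^n/Y$ with $X$ and of its generators (so that \emph{pullback of a generator is a generator} is literally true), together with the verification on evaluation characters that each induced map on spectra really restricts to the prescribed map on the dense copy of $X^n$. The reduction to checking only on generators is what makes all four statements uniform, and it is the one step I would isolate and prove first.
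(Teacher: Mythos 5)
Your Gelfand-duality strategy is sound (and, for the record, the paper states this theorem without any printed proof, so there is no argument to compare against line by line), but there is a genuine gap at exactly the step you flag as the one "requiring care": the identification of the generators. The theorem takes $\maS^n$ to be the semi-lattice \emph{generated} by the $\maS_i^n$ and $\maS_{ij}^n$, so it contains all their intersections, and by the definition \eqref{eq.def.maEX} the algebra $\maE_{\maS^n}(X^n)$ is generated by $C(\overline{X^n/Y})$ for \emph{all} $Y\in\maS^n$. Your claim that the two families $x\mapsto g(x_i)$ and $x\mapsto g(x_i-x_j)$ already generate is false: for an intersection $Y\cap Z\in\maS^n$, the algebra $C(\overline{X^n/(Y\cap Z)})$ is not contained in the $C^*$-algebra generated by $C(\overline{X^n/Y})$ and $C(\overline{X^n/Z})$, because functions with uniform \emph{radial} limits on a quotient of higher dimension do not factor through the coarser joint compactification. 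Concretely, take $n=2$, $X=\RR$: the function $f(x_1,x_2)=x_1x_2/(1+x_1^2+x_2^2)$ lies in $C(\overline{X^2})\subset\maE_{\maS^2}(X^2)$, since $\{0\}=\maS_1^2\cap\maS_2^2\in\maS^2$ --- you invoke exactly this membership to get $C_0(X^n)$ inside the algebra, which sits in tension with your generating claim --- yet along the paths $(t,ct)$, $t\to+\infty$, with $0<c<1$, every element of the $C^*$-algebra generated by $g(x_1)$, $g(x_2)$, $g(x_1-x_2)$ has a limit independent of $c$ (check it on polynomials in the generators and pass to uniform limits), whereas $f(t,ct)\to c/(1+c^2)$. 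Consequently your verification of $f^*B\subset A$ in part (3) is incomplete: it never covers the generator $C(\overline{X^k/Z})$ for $Z$ a proper intersection, e.g.\ $Z=\{0\}$, whose pullback $g(x_{i_1},\ldots,x_{i_k})$, $g\in C(\overline{X^k})$, lies outside the algebra you checked against. The same incompleteness affects part (2).

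The gap is repairable inside your own framework, which is why this is worth fixing rather than abandoning: run the check on the full generating family $\{C(\overline{X^k/Z})\,;\,Z\in\maS^k\}$. For (3), with distinct indices $i_1,\ldots,i_k$ the map $p_I^{n,k}$ is surjective, and since preimages commute with intersections and $(p_I^{n,k})^{-1}(\maS_a^k)=\maS_{i_a}^n$, $(p_I^{n,k})^{-1}(\maS_{ab}^k)=\maS_{i_ai_b}^n$, one gets $(p_I^{n,k})^{-1}(Z)\in\maS^n$ for every $Z\in\maS^k$; the induced map $X^n/(p_I^{n,k})^{-1}(Z)\to X^k/Z$ is a linear isomorphism, and linear isomorphisms extend to spherical compactifications, so $(p_I^{n,k})^*\,C(\overline{X^k/Z})=C\bigl(\overline{X^n/(p_I^{n,k})^{-1}(Z)}\bigr)\subset\maE_{\maS^n}(X^n)$. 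Likewise for (2): a permutation $\sigma$ maps the generating subspaces to one another, hence permutes the whole semi-lattice $\maS^n$, and $\sigma^*C(\overline{X^n/Y})=C(\overline{X^n/\sigma^{-1}Y})$, so $\sigma^*$ is an automorphism of the full algebra. Parts (1) and (4) of your argument, and the compactification statement itself, are correct as written --- in (4) the target algebra $C(\overline{X})=\maE_{\maS^1}(X)$ is its own generating family, so nothing is missed there.
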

Actually, the spectrum $\Omega_{\mathcal{S}^n}$ have very strong connection with the space built by Vasy in \cite{VasyReg}
and generalized by Kottke in the last section of \cite{Kottke}. \\
The additive group $X$ acts by translation on $C_b^u(X)$ 
and the subalgebra $\maE_\maS(X)$ is invariant. 
So a crossed product $C^{*}$-algebra is obtained
\begin{equation}\label{eq.def.rX}
 \maE_\maS(X)\rtimes X \ ,
\end{equation}
which can be regarded as an algebra of operators on $L^2(X)$. 
Thanks to the assumption $0\in\maS$, the algebra $C_0(X)$ belongs
$\maE_\maS(X)$. Hence $C_0(X)\rtimes X$ is contained in 
$\maE_\maS(X)\rtimes X$. It follows from the definition of crossed products
algebras that the $C^*$-algebra $\mES X \rtimes X$ is generated by two kinds of operators : 
multiplication operators $m_f$ associated to functions $f\in \mES X$, and 
convolution operators
$$C_\phi u(x) := \int_{X} \phi(y) u(x-y)dy$$ 
with $\phi \in C_c(X)$, a continuous compactly supported function. 
An immediate computation shows that $m_fc_\phi$ (resp. $c_\phi m_f$) 
is a kernel operator with kernel
\begin{equation} \label{mfcphi-eq}
K(x,y)=f(x)\phi(y-x)\,,\qquad (\text{resp. } K(x,y)=f(y)\phi(y-x)).
\end{equation}
\begin{proposition} \label{mfcphi}
(i) The subalgebra $C_0(X)\rtimes X$ is the algebra $\mathcal{K}(X)$
of compact operators on $L^2(X)$. \\
(ii) For $f\in C(\overline{X})$ and $\phi \in C_c(X)$ the commutator
$[m_f,c_\phi]$ is compact.
\end{proposition}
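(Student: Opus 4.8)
The plan is to read everything off the kernel formula \eqref{mfcphi-eq} and then to run, for part (ii), a cut-off together with a Schur-test argument.

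\emph{Part (i).} I would first record that the crossed product $C_0(X)\rtimes X$ is the closed linear span of the operators $m_f c_\phi$ with $f,\phi\in C_c(X)$. By \eqref{mfcphi-eq} each such operator is the integral operator with kernel $(x,y)\mapsto f(x)\phi(y-x)$, which is continuous and compactly supported, hence Hilbert--Schmidt and in particular compact. As $\mathcal{K}(X)$ is norm closed, this yields the inclusion $C_0(X)\rtimes X\subseteq\mathcal{K}(X)$. For the reverse inclusion I would use the change of variables $(x,y)\mapsto(x,y-x)$, a homeomorphism of $X\times X$, under which the kernels $f(x)\phi(y-x)$ become the elementary tensors $f(x)\phi(z)$; their linear span is therefore dense in $C_c(X\times X)$ for the inductive-limit topology. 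Consequently the kernel $u(x)\overline{v(y)}$ of an arbitrary rank-one operator $u\otimes\overline v$ with $u,v\in C_c(X)$ is a uniform limit, with supports contained in one fixed compact set, of kernels coming from the crossed product; uniform convergence together with fixed compact support forces convergence in Hilbert--Schmidt norm, hence in operator norm, so $u\otimes\overline v\in C_0(X)\rtimes X$. Since such rank-one operators span a dense subspace of $\mathcal{K}(X)$, we get $\mathcal{K}(X)\subseteq C_0(X)\rtimes X$, and thus equality.

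\emph{Part (ii).} Combining the two lines of \eqref{mfcphi-eq}, the commutator $[m_f,c_\phi]$ is the integral operator with kernel
\[
K(x,y)=\bigl(f(x)-f(y)\bigr)\phi(y-x).
\]
Fix $R>0$ with $\operatorname{supp}\phi\subseteq\{|z|\le R\}$, so that $K(x,y)=0$ unless $|x-y|\le R$. The single quantity that controls the argument is
\[
\eta(N):=\sup\{\,|f(x)-f(y)| \ :\ |x|\ge N,\ |x-y|\le R\,\}.
\]
I would first prove that $\eta(N)\to 0$ as $N\to\infty$; this is the step where the spherical compactification is essential. If it failed, one could extract sequences $x_n,y_n$ with $|x_n|\to\infty$, $|x_n-y_n|\le R$ and $|f(x_n)-f(y_n)|$ bounded below; but then $x_n/|x_n|$ and $y_n/|y_n|$ share the same limit points, so $x_n$ and $y_n$ converge to the same point of $\overline{X}=X\sqcup\SS_X$, and the continuity of $f$ on the compact space $\overline{X}$ forces $f(x_n)-f(y_n)\to 0$, a contradiction.

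Granting $\eta(N)\to 0$, I would split the operator according to a cut-off in the variable $x$: write $[m_f,c_\phi]=T_N+R_N$, where $T_N$ has kernel $K(x,y)\mathbf{1}_{|x|\le N}$ and $R_N$ has kernel $K(x,y)\mathbf{1}_{|x|>N}$. The kernel of $T_N$ is bounded, continuous and supported in the compact set $\{|x|\le N,\ |x-y|\le R\}$, hence $T_N$ is Hilbert--Schmidt, in particular compact. For $R_N$ a Schur estimate using $\operatorname{supp}\phi\subseteq\{|z|\le R\}$ bounds both the row and the column integrals by $\eta(N)\,\|\phi\|_{L^1}$, whence $\|R_N\|\le\eta(N)\,\|\phi\|_{L^1}\to 0$. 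Therefore $[m_f,c_\phi]=\lim_N T_N$ in operator norm is a limit of compact operators, and so is compact. The main obstacle is the uniform asymptotic-continuity estimate $\eta(N)\to0$; once it is in hand, the cut-off and the Schur bound are routine.
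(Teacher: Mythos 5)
Your proof is correct and follows essentially the same route as the paper: both parts are read off the kernel formula \eqref{mfcphi-eq}, with compactly supported continuous kernels giving Hilbert--Schmidt (hence compact) operators plus a density argument for (i), and the band-support-plus-vanishing-at-infinity structure of the commutator kernel, approximated by Hilbert--Schmidt truncations, for (ii). You merely make explicit two steps the paper leaves implicit, namely the compactness argument on $\overline{X}$ showing $\eta(N)\to 0$ and the Schur bound $\|R_N\|\le\eta(N)\,\|\phi\|_{L^1}$ controlling the tail of the cut-off.
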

The point $(i)$ is a consequence of equation \eqref{mfcphi-eq}
because the kernel $K$ has compact support when $f$ does and the 
result follows by density. 
Again, thanks to equation \eqref{mfcphi-eq}, one sees that
the commutator is a kernel operator with kernel
$$ K(x,y)=\phi(y-x)(f(x)-f(y))\,.$$
Hence, in view of $\phi \in C_c(X)$, the support of $K$ is contained in
a band around the diagonal. The distance between the border of the band and the diagonal is bounded.
Moreover, $K$ goes to $0$ at infinity because $f$ has 
radial limits. So the commutator is a limit of Hilbert-Schmidt operators, and hence is compact.

Recall that a self-adjoint operator $P$ on $L^2(X)$ is said to be affiliated to a
$C^*$-algrebra $A$ of bounded operators on $L^2(X)$ if
for some (and hence any) function $h\in C_0(\mathbb{R})$ then
$h(P)$ belongs to $A$. For example,, it follows from the identity
$$ 
(H+i)^{-1}=(-\Delta +i)^{-1}\left(1+V(-\Delta+i)^{-1}\right)^{-1}\,,
$$
that $H$ is affiliated to $\maE_\maS(X)\rtimes X$.
More generally, for any $C^*$-algebra $A$, a morphism $h\colon C_0(\mathbb{R}) \to A$ is called
an operator affiliated to $A$. 
Following Connes \cite{connes} and Baaj \cite{baaj} we introduce the $C^*$-algebra of 
non positive order pseudo-differential operators $\Psi\mathrm{DO}(\maE_\maS(X),X)$
together with the symbol map exact sequence
$$
0 \to \maE_\maS(X)\rtimes X \to 
\Psi\mathrm{DO}(\maE_\maS(X),X) 
\mathop{\longrightarrow}\limits^{\sigma_0}
C(\SS_X, \maE_\maS(X)) \to 0 \,.
$$
Positive order pseudo-differential operators
are examples of operators affiliated to the algebra of non positive order
pseudo-differential operators $\Psi\mathrm{DO}(\maE_\maS(X),X)$.

Let $\alpha \in \mathbb{S}_X$. 
For each $x\in X$, we let $(T_x f)(y)=f(y-x)$ denote the
translation on $L^2(X)$. For any operator $P$ on $L^2(X)$, we let
\begin{equation}\label{eq.def.tau}
 \tau_{\alpha}(P) = \lim_{r\to+\infty} T_{ra}^* P T_{ra}\ , \quad
 \mbox{if } \ \alpha = \hat{a} \in \SS_X\ ,
\end{equation}
whenever the {\em strong} limit exists. 
\begin{lemma} \label{tau}
For $f \in C(\overline{X/Y})$ one has
$$
\tau_\alpha(f)(x)=
\left\{\begin{array}{ll}
f(x) & \text{ if } Y\supset \alpha \,,\\
f(\pi_Y(\alpha)) & \text{ else. }
\end{array}\right.
$$
\end{lemma}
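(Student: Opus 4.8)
The plan is to reduce the statement to an explicit computation of the conjugate $T_{ra}^* m_f T_{ra}$, followed by a pointwise-limit argument. First I would observe that, since the translations $T_x$ are unitary with $T_x^* = T_{-x}$, conjugating a multiplication operator again yields a multiplication operator: a direct computation gives $\bigl(T_{ra}^* m_f T_{ra} u\bigr)(x) = f(x+ra)\,u(x)$, so that $T_{ra}^* m_f T_{ra} = m_{f_r}$ with $f_r := f(\,\cdot + ra)$. Thus $\tau_\alpha(f)$, if it exists, is the strong limit of the multiplication operators $m_{f_r}$ as $r\to+\infty$, and the whole problem becomes that of identifying the pointwise limit of the symbols $f_r$.

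Next I would record the elementary criterion for strong convergence of multiplication operators: if $(f_r)$ is uniformly bounded in sup-norm and converges pointwise almost everywhere to some $g$, then $m_{f_r}\to m_g$ strongly, by dominated convergence applied to $\int_X |f_r-g|^2|u|^2$ for each $u\in L^2(X)$. Since $f\in C(\oXY)$ is bounded, one has $\|f_r\|_\infty \le \|f\|_\infty$ for every $r$, so the required uniform bound is automatic, and it only remains to compute $\lim_{r\to+\infty} f_r(x)$ for fixed $x$.

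For that pointwise limit I distinguish the two cases of the statement. Write $\tilde f\in C(\oXY)$ for the continuous extension to the spherical compactification, so that $f = \tilde f\circ\pi_Y$ and $f_r(x) = \tilde f\bigl(\pi_Y(x) + r\,\pi_Y(a)\bigr)$. If $\alpha=\hat a\subset Y$, i.e. $a\in Y$, then $\pi_Y(a)=0$ and $f_r = f$ for every $r$, whence $\tau_\alpha(f)=f$. If $a\notin Y$, then $\pi_Y(a)\neq 0$ and, for each fixed $x$, the ray $\pi_Y(x)+r\,\pi_Y(a)$ converges in $\oXY$ to the boundary point $\widehat{\pi_Y(a)} = \pi_Y(\alpha)\in\SS_{X/Y}$ as $r\to+\infty$; by continuity of $\tilde f$ on the compactification this gives $f_r(x)\to \tilde f\bigl(\pi_Y(\alpha)\bigr)=f(\pi_Y(\alpha))$, a constant independent of $x$.

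The only genuinely delicate point is the passage from pointwise to strong convergence, that is, checking that the strong limit defining $\tau_\alpha$ really exists rather than merely a weak or pointwise one. This is exactly what the uniform bound $\|f_r\|_\infty\le\|f\|_\infty$ secures through dominated convergence; notably, no uniformity of the radial limit of $\tilde f$ in the base point $\pi_Y(x)$ is needed, since $L^2$-strong convergence only requires the pointwise limit integrated against $|u|^2$. Combining the two cases then yields the asserted formula for $\tau_\alpha(f)$.
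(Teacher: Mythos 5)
Your proof is correct: the conjugation identity $T_{ra}^* m_f T_{ra} = m_{f(\cdot+ra)}$, the dominated-convergence criterion upgrading the pointwise limit of the symbols to strong operator convergence, and the two-case computation of $\lim_r \tilde f(\pi_Y(x)+r\pi_Y(a))$ in $\oXY$ are all sound, and together they establish exactly the asserted formula. The paper states Lemma \ref{tau} without proof, and your argument is precisely the standard computation being taken for granted; you also correctly identify the one delicate point, namely that only strong (not norm) convergence holds when $a\notin Y$, since $f(\cdot+ra)$ does not converge uniformly on $X$, so the strong limit in \eqref{eq.def.tau} is indeed the right notion.
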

We define $\mathcal{S}_\alpha=\{Y\in\maS\,;\, \alpha \subset Y\}$.
It follows from the previous lemma that on $\maE_\maS(X)$, $\tau_\alpha$
is the projection on the subalgebra $\maE_{\maS_\alpha}(X)$,
$$
\tau_\alpha \colon \maE_\maS(X) \to \maE_{\maS_\alpha}(X)\,.
$$

\begin{theorem}\label{thm.main} 
\begin{enumerate}
\item Let $P$ be a self-adjoint operator affiliated to 
$\Psi\mathrm{DO}(\maE_\maS(X),X)$ and $\alpha = \hat{a} \in\SS_X$. Then the
limit $\tau_{\alpha}(P) := \lim_{r\to+\infty} T_{ra}^* P T_{ra}$
exists and
  \begin{equation*}
   \mathrm{Spec}_\mathrm{ess}(P) = \cup_{\alpha\in\SS_X} \
   \mathrm{Spec}(\tau_{\alpha}(P))\ .
  \end{equation*} 
\item  Let $P\in \Psi\mathrm{DO}(\maE_\maS(X),X)$. Then $P$ is a Fredholm operator if and only if
$P$ is elliptic (i.e. $\sigma_0(P)$ is invertible) and for all $\alpha\in\SS_X$, $\tau_\alpha(P)$ is invertible.
\end{enumerate}
\end{theorem}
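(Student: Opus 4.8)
Both statements will be deduced from a single principle, so I would first set up notation uniformly. Write $A:=\Psi\mathrm{DO}(\maE_\maS(X),X)$, $J:=\maE_\maS(X)\rtimes X$, and $\mathcal{K}:=C_0(X)\rtimes X=\mathcal{K}(L^2(X))$, so that Proposition~\ref{mfcphi} and the symbol exact sequence give nested ideals $\mathcal{K}\subset J\subset A$ together with $0\to J\to A\xrightarrow{\sigma_0}C(\SS_X,\maE_\maS(X))\to 0$. The plan is to show that Fredholmness and the essential spectrum are both governed by invertibility in the quotient $A/\mathcal{K}$, which I would analyze through an \emph{exhaustive family of representations} assembled from the $\tau_\alpha$ and $\sigma_0$. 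Indeed, for $P$ self-adjoint and affiliated to $A$ one has $\mathrm{Spec}_\mathrm{ess}(P)=\mathrm{Spec}(\dot P)$, where $\dot P$ is the image of $P$ in $A/\mathcal{K}$ (using $\mathcal{K}=\mathcal{K}(L^2(X))$ and Atkinson's theorem), while for bounded $P\in A$, Fredholmness is exactly invertibility of $\dot P$.

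The next step is to promote each $\tau_\alpha$ to a representation of $A/\mathcal{K}$. On the generators the strong limit \eqref{eq.def.tau} is computed by Lemma~\ref{tau}: it sends $m_f\mapsto m_{\tau_\alpha(f)}$ and fixes every convolution operator $c_\phi$, so $\tau_\alpha$ extends to a $*$-homomorphism $A\to\Psi\mathrm{DO}(\maE_{\maS_\alpha}(X),X)$. Since translating a compactly supported kernel off to infinity along $\hat a$ tends strongly to $0$, $\tau_\alpha$ annihilates $\mathcal{K}$ and descends to $A/\mathcal{K}$. For $P$ merely affiliated to $A$ I would define $\tau_\alpha(P)$ through resolvents: applying the $*$-homomorphism $\tau_\alpha$ to $(P\pm i)^{-1}\in A$ yields $(\tau_\alpha(P)\pm i)^{-1}$ for a self-adjoint operator $\tau_\alpha(P)$ affiliated to $\Psi\mathrm{DO}(\maE_{\maS_\alpha}(X),X)$, and one verifies that $T_{ra}^*PT_{ra}\to\tau_\alpha(P)$ in the strong resolvent sense, which is precisely the asserted existence of the limit.

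The hard part is to prove that $\mathcal{F}:=\{\sigma_0\}\cup\{\tau_\alpha:\alpha\in\SS_X\}$ is \emph{exhaustive} for $A/\mathcal{K}$, that is, every irreducible representation of $A/\mathcal{K}$ is weakly contained in some member of $\mathcal{F}$. I would stratify the primitive ideal spectrum using the two layers of the exact sequence: representations factoring through the symbol quotient $C(\SS_X,\maE_\maS(X))$ are, up to weak containment, evaluations and hence dominated by $\sigma_0$, while those supported on $J/\mathcal{K}$ must be matched with the $\tau_\alpha$. For the latter I would pass to the Gelfand spectrum $\Omega_\maS$ of $\maE_\maS(X)$ and realize $J$ as a crossed product for the action of $X$ on $\Omega_\maS$; the open orbit $X\subset\Omega_\maS$ reproduces exactly $\mathcal{K}$, so the irreducible representations of $J/\mathcal{K}$ are indexed by the boundary orbits, and Lemma~\ref{tau} identifies the regular representation attached to a boundary character over the direction $\alpha$ with the one defining $\tau_\alpha$. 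Setting up this orbit-to-$\tau_\alpha$ dictionary, and the ensuing weak containment, for an \emph{arbitrary} family $\maS$ is the genuinely new point: when $\maS$ is not a finite semi-lattice the boundary of $\Omega_\maS$ carries infinitely many, possibly non-closed, orbits, so the finite-stratification inductions of \cite{GN, MNP} are unavailable and must be replaced by the general exhaustive-families criterion.

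Granting exhaustiveness, both conclusions follow from the abstract spectral theorem for exhaustive families: an element (or affiliated operator) is invertible if and only if all its images under $\mathcal{F}$ are invertible, and its spectrum equals $\overline{\bigcup_{\pi\in\mathcal{F}}\mathrm{Spec}(\pi(\cdot))}$. For Part 2 this says at once that $\dot P$ is invertible, i.e. $P$ is Fredholm, exactly when $\sigma_0(P)$ and all $\tau_\alpha(P)$ are invertible. For Part 1 the Hamiltonians under consideration have positive order, and by the affiliation identity recalled above their resolvents lie in the ideal $J$, so $\sigma_0(h(P))=0$ for every $h\in C_0(\RR)$; the symbol therefore contributes the empty set and only the limit operators survive, giving $\mathrm{Spec}_\mathrm{ess}(P)=\overline{\bigcup_{\alpha\in\SS_X}\mathrm{Spec}(\tau_\alpha(P))}$. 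Finally I would remove the closure using that $\SS_X$ is compact and $\alpha\mapsto\tau_\alpha(P)$ is continuous in the strong resolvent topology, so that the union of spectra is already closed.
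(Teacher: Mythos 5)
Your overall architecture (reduce both statements to invertibility in quotients modulo $\mathcal{K}(X)$, then appeal to an exhaustive family built from the $\tau_\alpha$ and $\sigma_0$) is the same as the paper's, and your preliminary steps (extending $\tau_\alpha$ multiplicatively via Lemma~\ref{tau}, killing $\mathcal{K}(X)$, passing to affiliated operators through resolvents) are sound. But the decisive step --- that every irreducible representation of $\maE_\maS(X)\rtimes X/\mathcal{K}(X)$ kills $\ker(\tau_\alpha\rtimes X)$ for some $\alpha$ --- is precisely what you do not prove. You reduce it to the claim that the irreducible representations of the quotient are ``indexed by the boundary orbits'' of $X$ acting on the spectrum $\Omega_\maS$, with a dictionary matching orbits to the $\tau_\alpha$; such an orbit parametrization of $\Prim$ is not available for an arbitrary family $\maS$ (it requires the orbits to be well behaved, e.g.\ locally closed, and as you yourself note the boundary carries infinitely many, possibly non-closed, orbits). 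You then concede this dictionary is ``the genuinely new point'' and invoke an unnamed ``general exhaustive-families criterion'' that is never stated, so the proposal assumes its central claim. The paper's actual argument avoids classifying $\Prim$ altogether: since $0\in\maS$, one has $C(\oX)\subset\maE_\maS(X)$, Proposition~\ref{mfcphi}(ii) shows the image of $C(\SS_X)$ is \emph{central} in $\mathcal{M}(\maE_\maS(X)\rtimes X/\mathcal{K}(X))$, Schur's lemma forces any irreducible $\pi$ to restrict to a character $\chi_\alpha$ on $C(\oX)$, and the kernel identity $\ker\tau_\alpha=(\ker\chi_\alpha)\maE_\maS(X)$ (proved by comparing characters, using \cite[Lemma 6.7]{GN}) gives $\ker(\tau_\alpha\rtimes X)=(\ker\tau_\alpha)\rtimes X\subset\ker\pi$. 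No orbit structure is needed; that is what makes the argument work for arbitrary $\maS$. (Your route might conceivably be completed via Gootman--Rosenberg induction from stabilizers, $X$ being amenable, but that argument would have to be carried out, not gestured at.)

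Two further defects. First, your removal of the closure at the end is both invalid and unnecessary: strong resolvent continuity of $\alpha\mapsto\tau_\alpha(P)$ does not give upper semicontinuity of spectra (spectral points of $\tau_{\alpha_n}(P)$ can converge to a point lying in no $\mathrm{Spec}(\tau_\alpha(P))$), so compactness of $\SS_X$ buys nothing. The closure-free union is exactly the \emph{strictly spectral} property, which follows from exhaustiveness by \cite[Proposition 3.12]{NP} --- this is the entire point of upgrading the \emph{faithful} family of \cite{GN} to an \emph{exhaustive} one, and your earlier ``abstract spectral theorem'' already yields the union without closure pointwise in $\lambda$; writing the closure and then stripping it by a continuity argument suggests a misunderstanding of what exhaustiveness delivers. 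Second, your derivation of part (1) discards the symbol by assuming $P$ has positive order with resolvent in $J=\maE_\maS(X)\rtimes X$, a hypothesis absent from the statement as given; for a general self-adjoint $P$ affiliated to $\Psi\mathrm{DO}(\maE_\maS(X),X)$ the union over the $\tau_\alpha$ alone need not exhaust the essential spectrum (consider multiplication by a real $f\in C_0(X)$, where every $\tau_\alpha(m_f)=0$ while $\mathrm{Spec}_\mathrm{ess}(m_f)=\overline{f(X)}$), so your argument in fact proves the formula only under the additional $J$-affiliation assumption and you should have flagged that restriction explicitly rather than imported it silently.
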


This extends theorems of \cite{GN, MNP} in the following sense :
only operators affiliated to $\maE_\maS(X) \rtimes X$
are considered there, and the relation is
\begin{equation}\label{GN}
 \mathrm{Spec}_\mathrm{ess}(H) = \overline{\cup}_{\alpha\in\SS_X}
 \mathrm{Spec}(\tau_{\alpha}(H))
\end{equation}
in \cite{GN}. In \cite{MNP} only finite semi-lattice $\maS$ are considered.
The equation \eqref{GN} means that the family
$(\tau_\alpha)$ is a {\em faithful} family of morphism of
$\maE_\maS(X)\rtimes X$. The stronger result of \cite{MNP} is
obtained by showing that the family $(\tau_\alpha\rtimes X)_{\alpha\in \SS_X}$ is actually an
{\em exhaustive} family of representations of $\maE_\maS(X)\rtimes X$, 
when $\maS$ is a finite semi-lattice.
In the framework of admissible locally compact group, decomposition of essential
spectrum involving {\em exhaustive} families can be found in \cite{Mantoiu2} \cite{Mantoiu1}.
In fact, by \cite[Proposition 3.12]{NP}, exhaustive families are also
strictly spectral families in the following sense.

\begin{definition} \cite {NP,Roch} 
\begin{enumerate}
\item A family $(\phi_i)_{i\in I}$ of morphisms of a 
$C^*$-algebra $A$ is said to be \emph{exhaustive} if any primitive
ideal contains at least $\ker \phi_i$ for some $i\in I$.
\item A family  $(\phi_i)_{i\in I}$ of morphisms of a unital
$C^*$-algebra $A$ is said to be \emph{strictly spectral} if 
$$
(\forall a\in A)\qquad \mathrm{Spec}(a)=\cup_{i\in I}\mathrm{Spec}(\phi_i(a))
$$
\end{enumerate}
\end{definition}

\begin{theorem}
Let $\maS$ be a family of subspaces of $X$ with $0\in \maS$. 
Then the family $(\tau_\alpha\rtimes X)_{\alpha\in \SS_X}$ 
is an exhaustive family of $\maE_\maS(X)\rtimes X/\mathcal{K}(X)$. 
\end{theorem}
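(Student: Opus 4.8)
The plan is to reduce the statement to a single geometric observation about the Gelfand spectrum $\Omega$ of $\mES{X}$, namely that the \emph{direction at infinity} is a continuous, $X$-invariant, scalar observable. First I would identify the quotient. Since $0\in\maS$ we have $C_0(X)\subset C(\oX)\subset\mES{X}$, and $C_0(X)$ is exactly the ($X$-invariant) ideal of functions vanishing on the corona $\partial\Omega:=\Omega\smallsetminus X$; the induced exact sequence of crossed products yields a canonical isomorphism
$$
\mES{X}\rtimes X\big/\mathcal{K}(X)\ \cong\ C(\partial\Omega)\rtimes X\,.
$$
By Lemma~\ref{tau} each $\tau_\alpha$ kills $C_0(X)$ (radial limits of compactly supported functions vanish), so every $\tau_\alpha\rtimes X$ descends to this quotient. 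Dually, the surjection $\tau_\alpha\colon\mES{X}\to\maE_{\maS_\alpha}(X)$ corresponds to a closed embedding $\iota_\alpha\colon\Omega_{\maS_\alpha}\hookrightarrow\partial\Omega$ with image a closed $X$-invariant set $F_\alpha$, and the descended map is the restriction $C(\partial\Omega)\rtimes X\to C(F_\alpha)\rtimes X=\maE_{\maS_\alpha}(X)\rtimes X$; since $X$ is amenable its kernel is the invariant ideal $I_{F_\alpha}\rtimes X$. As a primitive ideal is the kernel of an irreducible representation, exhaustiveness means precisely: for every irreducible $\pi$ of the quotient there is an $\alpha$ with $I_{F_\alpha}\rtimes X=\ker(\tau_\alpha\rtimes X)\subseteq\ker\pi$.

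The heart of the matter is the description of $F_\alpha$ supplied by Lemma~\ref{tau}. Each character $\chi\in\Omega$ has a well defined component $\chi_Y\in\oXY$ on every generator $C(\oXY)$, $Y\in\maS$; because $\{0\}\in\maS$ the component $d(\chi):=\chi_{\{0\}}\in\oX$ is defined, and $\chi\in\partial\Omega$ if and only if $d(\chi)\in\SS_X$ (again since $C_0(X)\subset C(\oX)$). Writing $\alpha=\hat a$, Lemma~\ref{tau} identifies
$$
F_\alpha=\{\,\chi\in\partial\Omega : \chi_Y=\pi_Y(\alpha)\ \text{for all}\ Y\in\maS\ \text{with}\ a\notin Y\,\}=d^{-1}(\alpha)\,,
$$
the last equality being the crux: if $x_n\to\chi$ with $x_n/|x_n|\to a/|a|$, then $\pi_Y(x_n)/|x_n|\to\pi_Y(a)/|a|$, which is nonzero exactly when $a\notin Y$, forcing $\chi_Y=\pi_Y(\alpha)$ on those $Y$ while leaving $\chi_Y$ free when $a\in Y$. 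In particular $d\colon\partial\Omega\to\SS_X$ is continuous and surjective, and it is $X$-invariant because the translations $T_x$ fix the sphere at infinity, so that $d(x\cdot\chi)=d(\chi)$.

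With $d$ in hand the proof closes formally, bypassing any deep Effros--Hahn/quasi-orbit machinery. Pulling back along $d$ gives a $*$-homomorphism $d^{*}\colon C(\SS_X)\to C(\partial\Omega)\subset M\big(C(\partial\Omega)\rtimes X\big)$ whose image consists of $X$-invariant functions; an invariant function commutes with $C(\partial\Omega)$ and with the canonical unitaries implementing the translation action, hence is central in the multiplier algebra. Therefore, for any irreducible representation $\pi$ of $\mES{X}\rtimes X/\mathcal{K}(X)$, Schur's lemma forces $\pi\circ d^{*}$ to be evaluation at a single point $\alpha\in\SS_X$. Consequently $\pi$ annihilates the ideal generated by $\{\,d^{*}g : g(\alpha)=0\,\}$, whose hull is $d^{-1}(\alpha)=F_\alpha$ and which is therefore $I_{F_\alpha}\rtimes X=\ker(\tau_\alpha\rtimes X)$; thus $\ker(\tau_\alpha\rtimes X)\subseteq\ker\pi$, as required. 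The only genuinely non-formal step --- and the precise place where the hypothesis $0\in\maS$ is indispensable --- is the identification $F_\alpha=d^{-1}(\alpha)$ together with the continuity and invariance of $d$; once the direction at infinity is recognised as a central observable, exhaustiveness is automatic and, unlike in \cite{MNP}, no finiteness or semi-lattice assumption on $\maS$ is needed.
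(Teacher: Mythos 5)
Your proof is correct, and at its core it is the paper's own proof in Gelfand-dual clothing: both arguments hinge on exhibiting $C(\SS_X)$ as a central subalgebra of the multiplier algebra of $\mES{X}\rtimes X/\mathcal{K}(X)$, applying Schur's lemma to force an irreducible $\pi$ to determine a single direction $\alpha$, and identifying $\ker\tau_\alpha$ with the ideal generated by $\ker\chi_\alpha$, whence $\ker(\tau_\alpha\rtimes X)=(\ker\tau_\alpha)\rtimes X\subset\ker\pi$. The differences lie in how the two nontrivial steps are justified. For centrality, the paper computes at the operator level: $[m_f,c_\phi]$ has kernel $\phi(y-x)\bigl(f(x)-f(y)\bigr)$, compact for $f\in C(\oX)$ (Proposition~\ref{mfcphi}(ii)), then density; you instead pass explicitly to the quotient $C(\pa\Omega)\rtimes X$ (via $C_0(X)\rtimes X=\mathcal{K}(X)$ and exactness of the crossed product over the amenable group $X$, a step the paper leaves implicit in the multiplier diagram~\eqref{main_diagram}) and use $X$-invariance of the direction map $d$ on the corona --- which is the dual form of the same fact, since $T_xf-f\in C_0(X)$ for $f\in C(\oX)$ is precisely why the paper's commutator kernel vanishes at infinity. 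For the crux, your identification $F_\alpha=d^{-1}(\alpha)$ is exactly the paper's Proposition $\ker\tau_\alpha=(\ker\chi_\alpha)\mES{X}$ read through the hull--kernel correspondence; but where the paper compares characters and cites \cite[Lemma 6.7]{GN} for the key implication, you reprove that implication directly from density of $X$ in $\Omega$ via the limit $\pi_Y(x_n)/|x_n|\to\pi_Y(a)/|a|$, making the argument self-contained at the one place the paper outsources. What your route buys is a transparent geometric statement (a continuous $X$-invariant map $d\colon\pa\Omega\to\SS_X$ whose fibers are the hulls of the ideals $\ker\tau_\alpha$, after which exhaustiveness is formal) at the price of invoking standard crossed-product facts (exactness, and that $I_{F_\alpha}\rtimes X$ is the ideal generated by the invariant functions vanishing on $F_\alpha$); the paper's operator-level route never needs to name the quotient or those facts. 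I see no gap: your final ideal-generation claim is automatic in the commutative setting, since a closed ideal of $C(\pa\Omega)$ is determined by its hull, and your use of Schur's lemma matches the paper's verbatim.
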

Let us prove this result.
Let $\pi$ be an 
irreducible representation of $\maE_\maS(X)\rtimes X/\mathcal{K}(X)$.
It extends to an irreducible representation of $\maE_\maS(X)\rtimes X$
as well as to their multipliers algebras $\mathcal{M}(\maE_\maS(X)\rtimes X/\mathcal{K}(X))$
and $\mathcal{M}(\maE_\maS(X)\rtimes X)$. By proposition \ref{mfcphi}(i),
one obtains the following commutative diagram:
\begin{equation}\label{main_diagram}
\begin{diagram} 
 C(\overline{X})
  & \hookrightarrow
  & \maE_\maS(X)
  & \rTo
  & \mathcal{M}(\maE_\maS(X)\rtimes X)
  & 
  & \\
 \dTo^{p} 
  & & &
  & \dTo
  & 
  & \\
 C(\SS_X)
  &  & \rTo^\phi &
  & \mathcal{M}(\maE_\maS(X)\rtimes X/\mathcal{K}(X))
  & \rTo^\pi
  & \mathcal{B}(\mathcal{H}_\pi)
\end{diagram}
\end{equation}
\begin{lemma}
The image $\phi(C(\SS_X))$ is central in
$\mathcal{M}(\maE_\maS(X)\rtimes X/\mathcal{K}(X))$.
\end{lemma}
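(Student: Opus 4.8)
The plan is to reduce the assertion to the compactness of commutators already recorded in Proposition \ref{mfcphi}(ii). Write $A = \maE_\maS(X)\rtimes X$ and $B = A/\mathcal{K}(X)$, let $q\colon A \to B$ be the quotient map and $\bar q\colon \mathcal{M}(A) \to \mathcal{M}(B)$ its strictly continuous extension to multiplier algebras, which exists because $q$, viewed as a nondegenerate morphism $A \to \mathcal{M}(B)$, extends uniquely. Since the restriction map $p\colon C(\overline{X}) \to C(\SS_X)$ is surjective and the top row of \eqref{main_diagram} sends $f$ to the multiplication multiplier $m_f$, every element of $\phi(C(\SS_X))$ is of the form $z := \bar q(m_f)$ for some $f \in C(\overline{X})$. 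Thus it suffices to prove that each such $z$ is central in $\mathcal{M}(B)$.

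First I would reduce centrality in $\mathcal{M}(B)$ to commutation with $B$ alone. Because $B$ is an essential ideal in $\mathcal{M}(B)$, any $z \in \mathcal{M}(B)$ that commutes with every $b \in B$ is automatically central: for $T \in \mathcal{M}(B)$ and $b \in B$ one has $(zT - Tz)b = z(Tb) - T(zb) = (Tb)z - T(bz) = 0$, using $Tb, b \in B$ together with associativity, and nondegeneracy of $B$ then forces $zT = Tz$. Hence the problem becomes showing that $z$ commutes with all of $B$.

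Next, $B = q(A)$ is generated as a $C^*$-algebra by the elements $q(m_g c_\phi)$ with $g \in \maE_\maS(X)$ and $\phi \in C_c(X)$, since $A$ is generated by the $m_g c_\phi$. Because $\bar q$ is a homomorphism, it is enough to check that $z = \bar q(m_f)$ commutes with $\bar q(m_g)$ and with $q(c_\phi)$ separately. Commutation with $\bar q(m_g)$ is immediate: $m_f$ and $m_g$ are multiplication operators by elements of the commutative algebra $\maE_\maS(X)$, so $[m_f, m_g] = 0$ and $[\bar q(m_f), \bar q(m_g)] = \bar q([m_f,m_g]) = 0$. For the convolution part, $m_f c_\phi$ and $c_\phi m_f$ lie in $A$, so $[\bar q(m_f), q(c_\phi)] = q([m_f, c_\phi])$; by Proposition \ref{mfcphi}(ii) the commutator $[m_f, c_\phi]$ is compact, i.e. lies in $\mathcal{K}(X) = \ker q$, whence this vanishes. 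Combining the two, $z$ commutes with every generator $q(m_g c_\phi)$, hence with all of $B$.

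The main obstacle is not any single computation but making the multiplier-algebra bookkeeping rigorous: one must justify the extension $\bar q$ and the identities $\bar q(m_f)\,q(a) = q(m_f a)$ for $a \in A$ (so that commutators of multipliers may be read off from commutators inside $A$), and one must invoke the essentiality of $B$ in $\mathcal{M}(B)$ to pass from commutation with the ideal $B$ to centrality in the full multiplier algebra. Once these structural points are in place, the statement follows entirely from the commutativity of $\maE_\maS(X)$ and the compactness of $[m_f, c_\phi]$ supplied by Proposition \ref{mfcphi}.
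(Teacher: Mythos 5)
Your proof is correct and takes essentially the same route as the paper: reduce centrality to commutation, modulo compact operators, with the generators $m_g c_\phi$ of $\maE_\maS(X)\rtimes X$, invoke Proposition \ref{mfcphi}(ii) for the commutators $[m_f,c_\phi]$, and conclude by density. You simply make explicit the multiplier-algebra bookkeeping (the extension $\bar q$ and the essentiality of the ideal $B$ in $\mathcal{M}(B)$) that the paper's two-line argument leaves implicit.
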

In fact it is enough to show that any $f\in C(\overline{X})$
commutes with any element of $\maE_\maS(X)\rtimes X$ modulo a compact
operator. But the result is true on the generators by 
Proposition \ref{mfcphi}(ii), so  the lemma follows by density.
 
By the Schur Lemma, we deduce that $\pi\circ \phi$ is a character
of $C(\SS_X)$. Hence there exists some $\alpha\in\SS_X$ such that
$\pi|_{C(\overline{X})}=\chi_\alpha I$, where $\chi_\alpha$ is
the character of $C(\overline{X})$ given by the evaluation at 
$\alpha \in\SS_X$.
\begin{proposition}
One has $\ker \tau_\alpha=(\ker \chi_\alpha)\maE_\maS(X)$.
\end{proposition}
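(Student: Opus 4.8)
Throughout, write $\mathcal I:=(\ker\chi_\alpha)\,\maE_\maS(X)$ for the closed ideal of $\maE_\maS(X)$ generated by $\ker\chi_\alpha\subset C(\oX)$; recall that $C(\oX)=C(\overline{X/0})\subset\maE_\maS(X)$ since $0\in\maS$. The plan is to prove the two inclusions separately. The inclusion $\mathcal I\subseteq\ker\tau_\alpha$ is immediate from Lemma \ref{tau}: for $f\in C(\oX)$ with $\chi_\alpha(f)=f(\alpha)=0$, taking $Y=0$ (so that $\alpha\not\subset Y$ and $\pi_0=\mathrm{id}$) gives $\tau_\alpha(f)=f(\alpha)=0$; thus $\ker\chi_\alpha\subseteq\ker\tau_\alpha$, and as $\ker\tau_\alpha$ is a closed ideal it contains $\mathcal I$.

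For the reverse inclusion I would first reduce to generators. Recall from the discussion after Lemma \ref{tau} that $\tau_\alpha$ is the idempotent projection of $\maE_\maS(X)$ onto $\maE_{\maS_\alpha}(X)$: it fixes every $g\in C(\overline{X/Y})$ with $Y\in\maS_\alpha$ and sends the remaining generators to scalars. Hence $\ker\tau_\alpha=(\mathrm{id}-\tau_\alpha)\big(\maE_\maS(X)\big)$, and expanding a product $g_1\cdots g_k$ of generators as the telescoping sum $\sum_i \tau_\alpha(g_1)\cdots\tau_\alpha(g_{i-1})\,(g_i-\tau_\alpha(g_i))\,g_{i+1}\cdots g_k$ shows that $\ker\tau_\alpha$ is the closed ideal generated by the elements $g-\tau_\alpha(g)$, for $g\in C(\overline{X/Y})$, $Y\in\maS$. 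These vanish when $Y\in\maS_\alpha$, so everything comes down to the key point: for $Y\in\maS$ with $\alpha\not\subset Y$ and $g\in C(\overline{X/Y})$, one has $g-L\in\mathcal I$, where $L:=g(\pi_Y(\alpha))=\tau_\alpha(g)$ is the radial limit of $g$ in the direction $\pi_Y(a)\neq 0$.

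To prove this I would use a radial cut-off pointing in the direction $\alpha$. Fix $\epsilon>0$. By continuity of $g$ at the boundary point $\widehat{\pi_Y(a)}\in\mathbb{S}_{X/Y}$ there are an angular neighbourhood of $\pi_Y(a)$ and a radius outside of which $|g-L|<\epsilon$; since $\pi_Y(a)\neq0$, a sufficiently thin cone around $a$ in $X$ is mapped by $\pi_Y$ into that neighbourhood while being sent to infinity. I then choose $\eta\in C(\oX)$ with $0\le\eta\le1$, $\eta(\alpha)=1$, whose value at infinity is supported in this thin cone around $a$ and which vanishes for $|x|\le R$ with $R$ large. On the support of $\eta$ one has $|g-L|<\epsilon$, so $\|\eta(g-L)\|_\infty\le\epsilon$; on the other hand $(1-\eta)(\alpha)=0$, so $1-\eta\in\ker\chi_\alpha$ and therefore $(1-\eta)(g-L)\in\mathcal I$. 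Writing $g-L=(1-\eta)(g-L)+\eta(g-L)$ shows that $g-L$ lies within $\epsilon$ of the closed ideal $\mathcal I$; letting $\epsilon\to0$ gives $g-L\in\mathcal I$, and hence $\ker\tau_\alpha\subseteq\mathcal I$.

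The main obstacle is the geometric estimate in the last paragraph: that a thin cone around $a$ in $X$ projects under $\pi_Y$ onto a thin cone around $\pi_Y(a)$ in $X/Y$, on which $g$ is already within $\epsilon$ of its radial limit. This is exactly what forces the two ideals to agree; the rest is formal. It is also where the hypothesis $0\in\maS$ is essential, since the cut-off $\eta$ must be taken in $C(\oX)\subset\maE_\maS(X)$ in order to be tested against the character $\chi_\alpha$.
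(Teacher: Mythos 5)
Your proof is correct, but it takes a genuinely different route from the paper's. The paper argues dually, via Gelfand theory: since a closed ideal of a commutative $C^*$-algebra is determined by the set of characters vanishing on it, it suffices to show that the characters of $\maE_\maS(X)$ factoring through $\tau_\alpha$ --- which by Lemma \ref{tau} are exactly those satisfying $\chi(u)=u(\pi_Y(\alpha))$ for all $u\in C(\overline{X/Y})$ with $\alpha\not\subset Y$ --- coincide with those vanishing on $(\ker\chi_\alpha)\maE_\maS(X)$, i.e.\ those with $\chi|_{C(\oX)}=\chi_\alpha$; the nontrivial implication, that $\chi|_{C(\oX)}=\chi_\alpha$ alone already forces $\chi(u)=u(\pi_Y(\alpha))$ on every generator, is outsourced to \cite[Lemma 6.7]{GN}. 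You instead work directly with elements: you identify $\ker\tau_\alpha$ as the closed ideal generated by the differences $g-\tau_\alpha(g)$ over the generators (your telescoping-plus-density reduction is valid because, as noted after Lemma \ref{tau}, $\tau_\alpha$ is a contractive idempotent $*$-homomorphism onto $\maE_{\maS_\alpha}(X)$, so its kernel is a closed ideal and the reduction to generators goes through), and you prove the key membership $g-g(\pi_Y(\alpha))\in(\ker\chi_\alpha)\maE_\maS(X)$ by an explicit conical cut-off $\eta\in C(\oX)$ concentrated along $\alpha$. Your geometric estimate --- that a truncated thin cone around $a$ is carried by the linear map $\pi_Y$ into an arbitrarily small neighbourhood of $\widehat{\pi_Y(a)}$ in $\overline{X/Y}$, precisely because $\alpha\not\subset Y$ gives $\pi_Y(a)\neq 0$ --- is correct, and it is in substance a direct, self-contained proof of the very content of \cite[Lemma 6.7]{GN} that the paper cites. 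What each approach buys: the paper's hull argument is shorter given the external reference and makes transparent that only the character space matters; yours is elementary and quantitative (an explicit $\epsilon$-approximation of each generator of $\ker\tau_\alpha$ by elements of the other ideal), avoids any appeal to \cite{GN}, and usefully isolates where the hypothesis $0\in\maS$ enters, namely that the cut-off must be available inside $C(\oX)\subset\maE_\maS(X)$ so that it can be tested against $\chi_\alpha$.
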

\begin{proof}
We need to show that $\maE_\maS(X)/\ker \tau_\alpha=\maE_{\maS_\alpha}(X)$ and 
$\maE_\maS(X)/(\ker \chi_\alpha)\maE_\maS(X)$ have the same characters.
By definition, for any character $\chi$ of $\maE_{\maS_\alpha}(X)$,
there exists a unique character 
$\chi'$ of $\maE_{\maS}(X)$ such that $\chi'=\chi\circ\tau_\alpha$.
In view of lemma \ref{tau}, this is equivalent to the following :
\begin{equation}\label{taueq}
(\forall Y\in \maS, \alpha\not\subset Y, \forall u\in C(\overline{X/Y})) \quad \chi(u)=u(\pi_Y(\alpha)).
\end{equation}
In particular, for $Y=0$, we see that $\chi_{|C(\overline{X})}=\chi_\alpha$.
Reciprocally it follows from \cite[Lemma 6.7]{GN} that if 
$\chi_{|C(\overline{X})}=\chi_\alpha$ then relation \eqref{taueq} is true.
On the other hand, the characters of 
$\maE_\maS(X)/(\ker \chi_\alpha)\maE_\maS(X)$
are precisely the characters $\chi$ of $\maE_\maS(X)$ such that
$\chi_{C(\overline{X})}=\chi_\alpha$. So $\ker \tau_\alpha=(\ker\chi_\alpha)\maE_\maS(X)$ as claimed. 
\end{proof}
Now if $\pi_{|C(\overline{X})}=\chi_\alpha$, one has 
$\ker \pi \supset (\ker \chi_\alpha)\maE_\maS(X)=\ker \tau_\alpha$.
Finally, 
$$
\ker (\tau_\alpha\rtimes X) =
(\ker \tau_\alpha)\rtimes X \subset
\ker \pi \,.
$$
It follows that $(\tau_\alpha\rtimes X)_{\alpha \in \SS_X}$ is 
an exhaustive family of morphisms.
\begin{remark}
The results presented here can easily be extended to pseudo-differential operators with matrix coefficients.
For example, Dirac operators
$ D_V = D+V\,,$
 with potentials $V$ as in \eqref{eq.simple.H} may be considered and satisfy the condition of Theorem
\ref{thm.main}.
\end{remark}
See also \cite[Example 6.35]{GI3} for others physical interesting operators.

\subsection*{Acknowledgments} 
The authors thank Victor Nistor  for useful discussions. 

\bibliographystyle{plain} 
\bibliography{MP18-cras}

\end{document}